\newcommand\N{\mathds{N}}
\newcommand\R{\mathds{R}}
\newcommand\mcirc{\!\bigcirc}
\newcommand\wc{\red\mcirc\!}
\newcommand\wcp{\red\mcirc}
\newcommand\bs{\raisebox{-1.25pt}{\blue$\blacksquare$}}
\newcommand\gr[1]{\rlap{\,\gray#1}\hphantom{\wc}}
\newcommand\grr[1]{\rlap{\,\!\!\gray#1}\hphantom{\wc}}
\newcommand\ttilde{\raise.17ex\hbox{$\scriptstyle\mathtt{\sim}$}}
\newtheorem{theorem}{Theorem}[section]
\newtheorem{definition}[theorem]{Definition}
\newtheorem{lemma}[theorem]{Lemma}
\newtheorem{corollary}[theorem]{Corollary}
\newtheorem{example}[theorem]{Example}
\newcommand\nell{k}
\title{New developments of an old identity}
\author{Rui Duarte}
\address{Center for Research and Development in Mathematics and Applications,
Department of Mathematics,
University of Aveiro}
\email{rduarte@ua.pt}
\author{Ant\'onio Guedes de Oliveira} \address{CMUP and Mathematics
  Department, Faculty of Sciences, University of Porto}
\email{agoliv@fc.up.pt}
\date{\today}
\begin{document}

\begin{abstract}
We give a direct combinatorial proof of a famous identity,
\begin{equation}\label{mainId}
  \sum_{i+j=n}
  \binom{2i}{i}\binom{2j}{j}\;=\;\displaystyle4^n \end{equation}
by actually counting pairs of $\ell$-subsets of $2\ell$-sets. Then we
discuss two different generalizations of the identity, and end the paper
by presenting in explicit form the ordinary generating function of the
sequence $\left(\strut\binom{2n+\ell}{n}\right)_{n\in\N_0}$, where $\ell\in\R$.
\end{abstract}

\maketitle

\section{Introduction and definitions}
\noindent
In \cite[Exercise 1.4(a)]{RS}, Richard Stanley asks for a proof of the
identity:
$$(1-4x)^{-1/2}= \sum_{n\geq 0}\binom{2n}{n}\,x^n$$
from which \eqref{mainId} above follows immediately for all
nonnegative integers $i$, $j$ and $n$ \cite[Exercise 1.2(c)]{RS}.  In
\cite[Exercise 1.4(b)]{RS}, it is asked for the value of $\sum_{n\geq
  0}\binom{2n-1}{n}\,x^n$. The answer is one half plus one half of the
previous sum, and thus
\begin{equation}\label{eq1}
 \sum_{\substack{i\geq1\\i+j=n}}\binom{2i-1}{i}\binom{2j-1}{j}\;=\;\displaystyle4^{n-1}
  \quad(n>1).
\end{equation}
By replacing $i$ with $i+1$ and $n$ with $n+1$ in \eqref{eq1}, we may
see that this identity, as well as \eqref{mainId}, is a special case
of the new identity that we will prove --- in the notation of
\cite{RS}, where, in particular, for every real $\ell$ and
every nonnegative integer $i$, $\big(\ell\big)_i=\ell(\ell-1)\dotsb
(\ell-i+1)$ (the falling factorial) and
$\binom{\ell}{i}=\frac{\strut(\ell)_i}{\strut i!}$,
\begin{equation}\label{eq2}
  \sum_{i+j=n}\binom{2i-\ell}{i}\binom{2j+\ell}{j}\;=\;\displaystyle4^n.
\end{equation}
Our proof of the new identity does not use \eqref{mainId}.  On the
contrary, it proves the initial identity by using the
inclusion-exclusion principle for positive values of $\ell$.  Thus, in
particular, we give a new solution to the problem of finding a
combinatorial proof of \eqref{mainId}, that was posed and solved a
long time ago, since, according to Paul Erd\H{o}s, ``Hungarian
mathematicians tackled it in the thirties: P.  Veress proposing and
G. Hajos solving it'' \cite{MS}.  This is told by Marta Sved, who,
after having posed the problem of providing a combinatorial proof of
the identity in a previous article \cite{MSa}, describes a number of
answers received meanwhile as follows: ``All solutions are based, with
some variations, on the count of lattice paths, or equivalently
$(1,0)$ sequences''. Although this is neither the case of the cited
articles \cite{VdA,ChX}, for example, nor the case of the short proof
of Lemma~ \ref{newnewlemma}, below, a combinatorial proof is still
missing where $\binom{2i}{i}$ really stands for the number of
$i$-subsets of a $(2i)$-set.

We give here this proof, based on the following idea: suppose we
associate to a given summand on the left-hand side of \eqref{mainId}
the pair $(A,B)$, where $A\subseteq[2i]$, $|A|=i$, $B\subseteq[2j]$
and $|B|=j$, being $[\ell]=\{1,2,\dotsc,\ell\}$ for a nonnegative integer
$\ell$, as usual. We view the two intervals $[2i]$ and $[2j]$ as the sets
of slots of two rectangles, of size $2\times i$ and $2\times j$,
respectively, ordered row by row and joined in a $(2\times
n)$-rectangle $R$, the first on the left-hand side and the second on
the right-hand side. We color the elements of $A$ with one color and
the elements of $B$ with another color.  Hence, for example, we
represent $\big(\{2,3,4,8,9\},\{2,3,4,7,8,10\}\big)$ by
$$R\;=\;
\begin{array}{|c|c|c|c|c||c|c|c|c|c|c|}
  \hline
  \gr{1}&\wc&\wc&\wc&\gr{5}&\gr{1}&\bs&\bs&\bs&\gr{5}&\gr{6}\\
  \hline
  \gr{6}&\gr{7}&\wc&\wc&\grr{10}&\bs&\bs&\gr{9}&\bs&\grr{11}&\grr{12}\\
  \hline
\end{array}\;.$$

\begin{definition}
  \emph{We call \emph{configuration} (or \emph{$n$-configuration}) to
    a $(2\times n)$-rectangle where exactly $n$ of the $2n$ slots are
    painted with one of two colors, with the restriction that columns
    with both colors do not exist. The pair $(i,j)$ for a
    $(i+j)$-configuration $R$ with $i$ slots colored with color one
    and $j$ slots colored with color two (or $i$ slots filled with
    $\wcp$ and $j$ slots filled with $\bs$) is the \emph{type} of
    $R$.}  \emph{The columns with no colored slots are \emph{empty
      columns} and the columns with two colored slots
    \emph{towers}. Both are called \emph{even columns} and the columns
    with exactly one colored slot are called \emph{odd}.}  \emph{A
    type $(i,j)$ configuration is \emph{ordered} if the $i$ slots with
    color one belong to the leftmost $(2\times i)$-subrectangle and
    the $j$ slots of color two belong to the complementary $(2\times
    j)$-subrectangle. Finally, the set of ordered $n$-configurations
    is denoted $\mathcal{O}_n$ and the set of $n$-configurations
    without towers is denoted $\mathcal{T}_n$.}
\end{definition}

Note that the ordered configurations are exactly the
configurations that represent the pairs $(A,B)$ as defined above.
By definition, the number of towers and the number of empty columns in
each of the two original subrectangles are equal. In the example
above, the type is $(5,6)$, the empty columns have \emph{order} $1$,
$5$, $10$ and $11$ and the towers $3$, $4$, $7$ and $9$.

Note that $4^n$ is the total number of $n$-configurations
\emph{without} towers.  Hence, what \eqref{mainId} says is that the
number of ordered configurations equals the number of configurations
without towers, and in fact we define (recursively) a bijection
$\varphi$ between these two sets that leaves the ordered
configurations without towers invariant. More precisely, if $R$ is an
ordered configuration with $\nell$ towers then $\varphi(R)$ is a
configuration where exactly in $\nell$ cases a column of color two is
followed by a column of color one. The algorithm beneath the proof was
implemented in \emph{Mathematica} and can be obtained in
\cite{algoritmo}.  For example, for the configuration $R$ above,
$$\varphi(R)\;=\;
\begin{array}{|c|c|c|c|c|c|c|c|c|c|c|}
  \hline
  \bs&\gr{}&\wc&\gr{}&\gr{}&\gr{}&\bs&\gr{}&\wc&\gr{}&\wc\\
  \hline
  \gr{}&\bs&\gr{}&\bs&\wc&\bs&\gr{}&\wc&\gr{}&\bs&\gr{}\\
  \hline
\end{array}\;.$$

\medskip

After this new proof, in Section 3 we prove \eqref{eq2} and another
generalization of \eqref{mainId}: given integers $n\geq0$ and $t>0$,
define
$$
S_t(n) = \sum_{i_1 + \cdots + i_t = n} \binom{2i_1}{i_1} \binom{2i_2}{i_2} \dotsb \binom{2i_t}{i_t},
$$
where $i_1,\dotsc,i_n$ are any nonnegative integers (note that
\eqref{mainId} states that $S_2(n)=4^n$).  Only very recently
\cite{ChX}, Guisong Chang and Chen Xu showed, with a probabilistic
proof, that $S_t(n)$ depends only on $n$ and $t$. We give here a
combinatorial proof of this fact and obtain a generalization that also
includes \eqref{eq2} as a special case.  Finally, in Section 4 we
obtain explicitly the generating functions of the sequences involved
in these identities.

\section{New proof of the main identity}
\noindent
Given an $n$-configurations $R$ with $\nell$ towers, let $R'$ be
$(2\nell)$-configuration obtained from $R$ by removing all the odd
columns.  For a $(2\nell)$-configuration $S$ without odd columns, if
we delete one of the two equal rows of $S$ and then rearrange the
remaining $2\nell$ slots by placing, for every $1\leq i\leq\nell$,
slot $2i$ under slot $2i-1$, we obtain a $\nell$-rectangle
$S_\downarrow$ called the \emph{compression of $S$}. In the opposite
direction, given an $\nell$-configuration $T$, by doubling each slot
of $T$ column by column, downwards and from left to right, we obtain
the \emph{expansion} of $T$, denoted $T^\uparrow$. The
\emph{tower-configuration of $R$} is $R'_\downarrow$.

Note that all these three operations, when applied to an ordered
configuration, still give an ordered configuration.  For example, for
the $11$-configuration $R$ above,
$$R'=\begin{array}{|c|c|c|c||c|c|c|c|}
  \hline
  \gr{1}&\wc&\wc&\gr{4}&\bs&\bs&\gr{3}&\gr{4}\\
  \hline
  \gr{}&\wc&\wc&\gr{}&\bs&\bs&\gr{}&\gr{}\\
  \hline
\end{array}\,,\
R'_\downarrow=\begin{array}{|c|c||c|c|}
  \hline
  \gr{1}&\wc&\bs&\gr{3}\\
  \hline
  \wc&\gr{4}&\bs&\gr{4}\\
  \hline
\end{array}
\ \text{and}\
\left(R'_\downarrow\right)'_\downarrow=\begin{array}{||c|}
\hline\bs\\\hline\gr{}\\\hline
\end{array}\;.$$

In the proof of Theorem~\ref{theo}, we will need the following
definition:
\begin{definition}
\label{defpeq}
  \emph{ Let $R$ be a $n$-configuration where all but the first and
    the last columns are odd, and where either:
\begin{enumerate}
\item[(a)] \label{ala} The first column of $R$ is a tower of color $c$ and
  the last one is empty.
\item[(b)] \label{alb} The first column of $R$ is empty and the last one is
  a tower of color $c$.
\end{enumerate}
Then define $\phi_1 (R)$ and $\phi_2 (R)$ as follows:
\begin{enumerate}
\item  Color with the second color ($\bs$) one slot of the first column
  of $\phi_i(R)$, as follows: color the bottom one if $c=1$ and
  color the top one if $c=2$.
\item Color with the first color ($\:\wcp$) one slot of the $n$-th and
  last column of $\phi_i(R)$, as follows: color the bottom one in
  case~\eqref{ala} and color the top one in case~\eqref{alb}.
\item Color with the second color one slot of every column in-between
  of $\phi_1(R)$, the same as in the corresponding column of $R$;
  if the $(n-1)$-th column of $\phi_1(R)$ becomes different from
  the first one, change the position of the colored slot of every
  column but the last one.
\item Color with the first color one slot of every column in-between
  of $\phi_2(R)$, the same as in the corresponding column of $R$;
  if the second column of $\phi_2(R)$ becomes different from the
  last one, change the position of the colored slot of every column
  but the first one.
\end{enumerate}}
\end{definition}

We may now present our new proof of the theorem:
\begin{theorem}\label{theo}
  For every natural number $n$ there is a bijection
  $\varphi=\varphi_n:\mathcal{O}_n\to\mathcal{T}_n$.
\end{theorem}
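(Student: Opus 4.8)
The plan is to prove Theorem~\ref{theo} by strong induction on $n$, defining the bijection $\varphi_n$ explicitly and at the same time proving the sharper statement promised in the introduction: if $R\in\mathcal{O}_n$ has $k$ towers, then $\varphi_n(R)$ has exactly $k$ columns of color two immediately followed by a column of color one --- call these the \emph{descents} of $\varphi_n(R)$ --- and $\varphi_n$ fixes every element of $\mathcal{O}_n\cap\mathcal{T}_n$, that is, every ordered configuration without towers. The base cases $n\le 1$ are immediate: for $n\le1$ a $2\times n$ rectangle admits no tower, so $\mathcal{O}_n=\mathcal{T}_n$ and $\varphi_n$ is the identity, with no descent.

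For the inductive step, let $R\in\mathcal{O}_n$ have $k\ge1$ towers. Since in each block of $R$ the towers are as many as the empty columns, $2k\le n$, so $k<n$ and the tower-configuration $\widetilde{R}:=R'_\downarrow$ lies in $\mathcal{O}_k$; by induction $S:=\varphi_k(\widetilde{R})\in\mathcal{T}_k$ is already defined, with as many descents as $\widetilde{R}$ has towers. I would first record the data that $R$ carries besides $\widetilde R$: the positions among the $n$ columns of the $2k$ even columns (the towers and empty columns, matched in nested order inside each block) and the $n-2k$ odd columns together with their rows. Then I would form the expansion $S^{\uparrow}\in\mathcal{T}_{2k}$, in which each of the $k$ columns of $S$ has become a two-column block \textup{[tower,\,empty]} or \textup{[empty,\,tower]}; reinsert the odd columns of $R$ into these $k$ blocks exactly as they lie relative to the matched tower--empty pairs of $R$; and apply to each of the $k$ resulting blocks --- now of one of the two forms of Definition~\ref{defpeq} --- the appropriate map $\phi_1$ or $\phi_2$, the choice being dictated by which block of $R$ the piece came from. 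Concatenating the outputs produces $\varphi_n(R)\in\mathcal{T}_n$; each of the $k$ blocks contributes exactly one descent, and the conditional repositioning prescribed in Definition~\ref{defpeq} is designed so that no descent is created at the junctions between blocks or between a block and a surviving odd column, so $\varphi_n(R)$ has exactly $k$ descents. (When $k=0$ we have set $\varphi_n(R)=R$.)

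Finally I would show $\varphi_n$ is a bijection by exhibiting the inverse. Given $T\in\mathcal{T}_n$ with $k$ descents, the color word of $T$ splits canonically into $k$ blocks, each of the shape produced by $\phi_1$ or $\phi_2$ (to be checked, using that such a block has no internal descent); applying the corresponding $\phi_i^{-1}$ recovers for each block a piece in the sense of Definition~\ref{defpeq}, from which one reads off its odd columns and its tower--empty pair. Collecting the tower--empty pairs and compressing gives an element $S\in\mathcal{T}_k$; then $\widetilde{R}=\varphi_k^{-1}(S)=S_\downarrow\in\mathcal{O}_k$, $R'=\widetilde{R}^{\uparrow}$, and reinserting the recorded odd columns at their recorded positions yields the unique $R\in\mathcal{O}_n$ with $\varphi_n(R)=T$. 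That the two constructions are mutually inverse reduces to two points: that compression and expansion are inverse to each other (on configurations without odd columns, respectively without towers), which is routine; and that $\phi_1,\phi_2$ are injective with recognizable images, and that the case distinction (a)/(b), the choice between $\phi_1$ and $\phi_2$, and the conditional repositioning of colored slots in Definition~\ref{defpeq} can all be undone from the output alone. I expect this last point to be the main obstacle: it requires working through the local patterns of two adjacent pieces, and of a piece next to an odd column, to confirm that whether a repositioning took place is visible in $\varphi_n(R)$, so that the construction can be inverted one block at a time without ambiguity. With the bijection in hand, $|\mathcal{O}_n|=|\mathcal{T}_n|=4^n$, which is exactly \eqref{mainId}.
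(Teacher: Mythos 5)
Your proposal follows essentially the same route as the paper's proof: induct on $n$, compress the tower-configuration $R'_\downarrow$, apply $\varphi_\nell$ to it, expand the result, substitute the resulting tower--empty pairs back in place of the even columns of $R$, apply $\phi_1$ or $\phi_2$ to each of the $\nell$ resulting sections according to which original block it lies in, and invert by locating the $\nell$ descents. The only blemishes are notational (the expansion $S^\uparrow$ has $\nell$ towers, so it does not lie in $\mathcal{T}_{2\nell}$, and $\varphi_\nell^{-1}(S)\neq S_\downarrow$); the verification you flag as the main remaining obstacle is likewise left implicit in the paper.
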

\begin{proof}(By induction on $n$)\\[5pt]
  \textit{Definition of $\varphi$.}  Let $R\in\mathcal{O}_n$. If $R$
  has no towers, define $\varphi(R)=R$.  If $R$ has $\nell>0$ towers
  consider $S=R'$ and note that $S_\downarrow$, by induction, is in
  bijection with a configuration without towers, $T=\varphi_\nell(S)$,
  say; now, take the $(2\nell)$-configuration $U=T^\uparrow$ and
  replace every (even) column of $R'$ in $R$ by the corresponding
  (even) column of $U$, obtaining thus $R^*$, say. Since the tower
  configuration of $R^*$ is still $T$, which has no towers, the towers
  and empty columns of $R^*$ appear in pairs, meaning that for every
  $1\leq j\leq\nell$ either the $(2j-1)$-th even column is a tower and
  the $(2j)$-th even column is empty or vice versa. Note that, by
  construction, the orders of the $(2j-1)$-th and the $(2j)$-th
  columns are (a) both smaller or equal to the type $t$ of $R$ or they
  are (b) both greater than this number.

  Now, for every $1\leq j\leq\nell$, in the cases where (a) holds
  replace the section $R_j$ between the $(2j-1)$-th even column and
  the $(2j)$-th even column, included, by $\phi_1(R_j)$ as defined in
  Definition~\eqref{defpeq}, and in the cases where (b) holds replace
  it by  $\phi_2(R_j)$.\\[5pt]
  \textit{Bijectivity of $\varphi$.} By considering only the
  consecutive pairs formed by a column of color two followed by a
  column of color one of $\varphi(R)$, we obtain a
  $(2\nell)$-configuration without towers that encodes, according to
  Definition~\eqref{defpeq}, a unique $(2\nell)$-configuration $S$
  without odd columns.  Consider $T=S_\downarrow$ and take, by
  induction, $U=\big(\varphi_\nell^{-1}(T)\big)^\uparrow$.  Clearly,
  the towers of $U$ are the towers of $R$ and the column of color one
  of $\varphi(R)$ corresponding to the last even column of the same
  color of $R$ has the same order in $\varphi(R)$ as the latter in
  $R$, and the same happens with the column of color two of
  $\varphi(R)$ corresponding to the first even column of the same
  color in $R$. Hence, by induction we know the type of $R$ and we may
  use Definition~\ref{defpeq} to fully recover $R$.\end{proof}

\begin{example}\emph{ Since by Definition~\ref{defpeq}
$$\begin{array}{|c|c|c|c|}
\hline
\bs&\wc&\gr{}&\wc\\
\hline
\gr{}&\gr{}&\bs&\gr{}\\
\hline
\end{array}
\ \text{encodes}\
\begin{array}{|c|c|c|c|}
\hline
\gr{}&\bs&\gr{}&\wc\\
\hline
\gr{}&\bs&\gr{}&\wc\\
\hline
\end{array}
\quad\text{and}\quad
\begin{array}{|c|c|}
\hline\gr{}&\gr{}\\\hline\bs&\wcp\\\hline
\end{array}\ \text{encodes}\
\begin{array}{|c|c|}
\hline\wc&\gr{}\\\hline\wc&\gr{}\\\hline
\end{array}\,,$$
which contains no $\bs$-towers followed by $\wcp$-columns,
\begin{align*}
&\varphi^{-1}\left(\;
\begin{array}{|c|c|}
\hline\gr{}&\gr{}\\\hline\bs&\wcp\\\hline
\end{array}\;\right)=
\begin{array}{|c|c|}
\hline\wc&\gr{}\\\hline\wc&\gr{}\\\hline
\end{array}\;,\\[5pt]
&\varphi^{-1}\left(\;
\begin{array}{|c|c|c|c|}
\hline
\bs&\wc&\gr{}&\wc\\
\hline
\gr{}&\gr{}&\bs&\gr{}\\
\hline
\end{array}\;\right)=
\begin{array}{|c|c|c|c|}
\hline
\wc&\wc&\gr{}&\gr{}\\
\hline
\wc&\wc&\gr{}&\gr{}\\
\hline
\end{array}
\quad\text{and}\\[5pt]
&\varphi^{-1}\bigg(\;\begin{array}{|c|c|c|c|c|c|c|c|c|c|c|c|c|}
\hline
\gr{}&\bs&\bs&\wcp&\gr{}&\gr{}&\bs&\gr{}&\wc&\bs&\bs&\bs&\gr{}\\
\hline
\wc&\gr{}&\gr{}&\gr{}&\wc&\wc&\gr{}&\bs&\gr{}&\gr{}&\gr{}&\gr{}&\bs\\
\hline
\end{array}\;\bigg)=\\[2.5pt]
&\hphantom{\varphi^{-1}\bigg(}\;\begin{array}{|c|c|c|c|c|c|c|c|c||c|c|c|c|}
\hline
\gr{}&\wcp&\wcp&\wcp&\gr{}&\gr{}&\gr{}&\wcp&\gr{}&\bs&\bs&\bs&\gr{}\\
\hline
\wc&\wc&\gr{}&\wc&\wc&\wc&\gr{}&\gr{}&\gr{}&\gr{}&\gr{}&\gr{}&\bs\\
\hline
\end{array}
\end{align*} }
\end{example}

\section{On Chang-Xu generalization of the main identity}
\noindent
The results of this section can be shown using the generating
functions obtained in the following section. In the spirit of this
article, however, we present combinatorial proofs.

The main result of \cite{ChX} is a generalization of \eqref{mainId}
that we may write as:
\begin{equation} \label{CXa} \sum_{i_1 + \cdots + i_t
    =n}\binom{2i_1}{i_1}\binom{2i_2}{i_2}\dotsb \binom{2i_t}{i_t}= 4^n
  \binom{n+\frac{t}{2}-1}{n}.
\end{equation}
where $i_1,\dotsc,i_n$ are any nonnegative integers.

Denote by $S_t (n)$ the left hand-side of \eqref{CXa}. We remark that
$S_1 (n) = \binom{2n}{n}$ and, by \eqref{mainId}, $S_2(n) = 4^n$.
Note also that \eqref{CXa} can be obtained using induction and the
following lemma. Finally, we observe that, when $t=2\nell+1$, $4^n
\binom{n+\frac{2 \nell +
    1}{2}-1}{n}=\frac{\binom{2n+2\nell}{2n}}{\binom{n+\nell}{n}}
\binom{2n}{n}=
\frac{\binom{2n+2\nell}{n+\nell}}{\binom{2\nell}{\nell}}
\binom{n+\nell}{n}$.

\begin{lemma} For every positive integer $t$ and every nonnegative
  integer $n$,
  \begin{equation*}
    S_{t+2} (n+1) = S_t (n+1) + 4\,S_{t+2} (n)
  \end{equation*}
\end{lemma}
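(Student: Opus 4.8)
The plan is to reduce the recurrence to the convolution identity
\[
  S_{t+2}(n)=\sum_{p+m=n}S_t(p)\cdot 4^{m},
\]
from which the statement follows by separating the $m=0$ term. This convolution identity is itself almost immediate: partitioning the index set $\{(i_1,\dots,i_{t+2}):i_1+\cdots+i_{t+2}=n\}$ according to the partial sum $p=i_1+\cdots+i_t$, so that $m:=i_{t+1}+i_{t+2}=n-p$, gives
\[
  S_{t+2}(n)=\sum_{p+m=n}S_t(p)\sum_{i_{t+1}+i_{t+2}=m}\binom{2i_{t+1}}{i_{t+1}}\binom{2i_{t+2}}{i_{t+2}}=\sum_{p+m=n}S_t(p)\cdot 4^{m},
\]
the inner sum being $4^m$ by the main identity \eqref{mainId}, equivalently by the bijection $\varphi_m\colon\mathcal{O}_m\to\mathcal{T}_m$ of Theorem~\ref{theo}, which gives $|\mathcal{O}_m|=|\mathcal{T}_m|=4^m$.

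To keep the proof in the combinatorial spirit of the paper, I would carry this out on actual objects. Model $S_t(p)$ as the number of $t$-tuples $(A_1,\dots,A_t)$ with $A_\nell\subseteq[2i_\nell]$, $|A_\nell|=i_\nell$, $i_1+\cdots+i_t=p$, and --- using $|\mathcal{T}_m|=4^m$ --- model $4^m$ by the set $\mathcal{T}_m$ of tower-free $m$-configurations, which is literally the set of words of length $m$ over the $4$-letter alphabet of admissible columns (two colours, two positions). Then $S_{t+2}(n)$ counts pairs $(\mathbf{A},w)$ with $\mathbf{A}$ such a tuple at some level $p$ and $w\in\mathcal{T}_{n-p}$ (the component $w$ replaces, via $\varphi$, the last two coordinates $A_{t+1},A_{t+2}$). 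Now split these pairs by whether $w$ is empty. The pairs with $m=n+1-p=0$ are exactly the tuples counted by $S_t(n+1)$. For the remaining pairs, $w$ has a first column; deleting it --- there are $4$ choices for that column --- is a bijection onto the pairs $(\mathbf{A},w')$, $w'\in\mathcal{T}_{m-1}$, counted by $S_{t+2}(n)$. Hence $S_{t+2}(n+1)=S_t(n+1)+4\,S_{t+2}(n)$.

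I do not expect a genuine obstacle; the content is entirely in the identity $|\mathcal{T}_m|=4^m$ (i.e.\ in \eqref{mainId}/Theorem~\ref{theo}), which is already available, and the rest is bookkeeping. The one point worth stating carefully is why $\mathcal{T}_m$ really is a set of $4$-letter words: in a tower-free $m$-configuration the $m$ coloured slots lie in $m$ columns with at most one per column, so every column carries exactly one coloured slot, whose colour-and-position is one of $4$ states chosen independently across columns; this both gives $|\mathcal{T}_m|=4^m$ and makes ``erase the first column'' well defined. The other thing to double-check is that the degenerate summand $p=n+1$, $m=0$ is handled correctly when it is pulled out of the sum --- it contributes precisely the term $S_t(n+1)$.
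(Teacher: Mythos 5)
Your proof is correct and follows essentially the same route as the paper's: write $S_{t+2}$ as the convolution of $S_t$ with $S_2(m)=4^m$, split off the term with $m=0$ to get $S_t(n+1)$, and recognize the remaining sum as $4\,S_{t+2}(n)$. The combinatorial dressing with tower-free configurations is a harmless elaboration of the same one-line computation.
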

\begin{proof}
  In fact, $S_{t+2} (n+1) = \sum_{j=0}^{n+1} S_2 (n+1-j) S_t (j) = S_t
  (n+1) + 4 \sum_{j=0}^{n} 4^{n-j} S_t (j).$
\end{proof}

We now prove identity~\eqref{eq2} by using the inclusion-exclusion
principle and then we generalize it in Lemma~\ref{newlemma},
below. Note that the result is valid for every $\ell\in\R$, since
$\sum_{i+j=n} \binom{2i-\ell}{i} \binom{2j+\ell}{j}$ is a polynomial
in $\ell$ (of degree at most $n$). In particular, this gives us yet
another proof of \eqref{mainId}, in fact a very short one.

\begin{lemma}\label{newnewlemma}
  For every nonnegative integer numbers $i$, $j$, $n$ and $\ell$ such
  that $\ell>2n$,
$$\sum_{i+j=n}\binom{2i-\ell}{i}\binom{2j+\ell}{j}\;=\;\displaystyle4^n.$$
\end{lemma}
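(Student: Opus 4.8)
The idea is to read the left-hand side as an inclusion–exclusion count and then evaluate the resulting family by hand. First, since $\ell>2n\ge 2i$ for every $i$ in the sum, the upper index $2i-\ell$ is negative, so the upper-negation rule gives
$$\binom{2i-\ell}{i}=(-1)^i\binom{\ell-i-1}{i},$$
and here $\ell-i-1\ge i$, so $\binom{\ell-i-1}{i}$ is a genuine binomial coefficient — the number of $i$-element subsets of $\{1,\dots,\ell-2\}$ containing no two consecutive integers. Eliminating $j=n-i$, the left-hand side becomes $\sum_{i=0}^{n}(-1)^i\binom{\ell-i-1}{i}\binom{2(n-i)+\ell}{n-i}$, so it suffices to produce a set of cardinality $4^n$ whose inclusion–exclusion count is exactly this alternating sum.

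I would take as ground set the family $\Omega$ of all $n$-subsets of $[2n+\ell]$, so that $|\Omega|=\binom{2n+\ell}{n}$ is the $i=0$ term, and, for each $a\in\{1,\dots,\ell-2\}$, the event $E_a=\{A\in\Omega:\ a\in A \text{ and } a+1\notin A\}$. The key local fact is that for $S=\{a_1<\dots<a_i\}$ the intersection $\bigcap_{a\in S}E_a$ is empty as soon as $S$ contains two consecutive integers, while otherwise it forces $i$ prescribed elements of $[\ell-1]$ to lie in $A$ and $i$ further prescribed elements to lie outside, leaving a free choice of $n-i$ elements among the remaining $2n+\ell-2i$; hence $\bigl|\bigcap_{a\in S}E_a\bigr|=\binom{2(n-i)+\ell}{n-i}$ depends only on $i=|S|$, and the number of admissible $S$ of size $i$ is precisely $\binom{\ell-i-1}{i}$. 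Plain inclusion–exclusion then identifies our alternating sum with the number of $A\in\Omega$ meeting no $E_a$.

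It remains to count those $A$. Meeting no $E_a$ means $a\in A\Rightarrow a+1\in A$ for every $a\le\ell-2$, which forces $A\cap[\ell-1]$ to be a (possibly empty) final segment $\{m,m+1,\dots,\ell-1\}$; thus $A$ is obtained by choosing such a segment, of some size $p\in\{0,1,\dots,\ell-1\}$, together with an arbitrary $(n-p)$-subset of the remaining $2n+1$ elements $\{\ell,\ell+1,\dots,2n+\ell\}$. Since $\ell-1\ge 2n\ge n$, the total is $\sum_{p=0}^{\ell-1}\binom{2n+1}{n-p}=\sum_{q=0}^{n}\binom{2n+1}{q}=\tfrac12\cdot 2^{2n+1}=4^n$, as required. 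The delicate point is the set-up: one must choose the ground set and the events $(E_a)$ so that inclusion–exclusion reproduces the summand $\binom{\ell-i-1}{i}\binom{2(n-i)+\ell}{n-i}$ on the nose — this is what pins down the index range $a\le\ell-2$, and it is exactly here that $\ell>2n$ is used, both to legitimise negating $2i-\ell$ and to guarantee $\ell-1\ge n$ so that the final binomial sum is precisely half of $2^{2n+1}$. Once the configuration is fixed, the two computations above are routine.
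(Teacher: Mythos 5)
Your proof is correct, and it takes a genuinely different route from the paper's, even though both begin with the same upper negation $\binom{2i-\ell}{i}=(-1)^i\binom{\ell-i-1}{i}$ and both end with the half-sum $\sum_{q=0}^{n}\binom{2n+1}{q}=4^n$. The paper first expands $\binom{2(n-i)+\ell}{n-i}$ by Vandermonde's identity as $\sum_{k+m=n-i}\binom{2n+1}{k}\binom{\ell-1-2i}{m}$, exchanges the order of summation, and reduces everything to the auxiliary identity $\sum_{i+m=p}(-1)^i\binom{\ell-1-i}{i}\binom{\ell-1-2i}{m}=1$, which it proves by inclusion--exclusion over the $(\ell-p)$-subsets of $[\ell]$, classified by which elements of $[\ell-p]$ they omit. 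You instead build a single combinatorial model for the entire alternating sum: $n$-subsets $A$ of $[2n+\ell]$ with events $E_a=\{A: a\in A,\ a+1\notin A\}$ for $a\le\ell-2$. The intersections over non-consecutive index sets have exactly the cardinalities $\binom{2(n-i)+\ell}{n-i}$, the factor $\binom{\ell-i-1}{i}$ appears for free as the count of non-consecutive $i$-subsets of $[\ell-2]$ (with the terms $i>n$ vanishing on both sides), and the survivors --- those $A$ whose trace on $[\ell-1]$ is a final segment --- are counted directly. Your version buys a one-shot argument with no Vandermonde step and no summation exchange, and it exhibits an explicit family of $4^n$ objects enumerated by the left-hand side; the paper's version isolates a small reusable inner identity and keeps the inclusion--exclusion on a smaller ground set. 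Both arguments use $\ell>2n$ in the same two places (to legitimise the negation and to ensure the binomial sum is a full half of $2^{2n+1}$), and both extend to real $\ell$ via the polynomiality remark that precedes the lemma.
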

\begin{proof}
First note that:
\begin{eqnarray*}
\sum_{i+j=n} \binom{2i-\ell}{i} \binom{2j+\ell}{j} & = & \sum_{i+j=n}
(-1)^i \binom{\ell-1-i}{i} \binom{2n+\ell-2i}{j} \\
& = & \sum_{i+j=n} \left[ (-1)^i \binom{\ell-1-i}{i} \sum_{k+m=j}
\binom{2n+1}{k} \binom{\ell-1-2i}{m} \right] \\
& = & \sum_{k=0}^n \left[ \binom{2n+1}{k} \sum_{i+m=n-k} (-1)^i
\binom{\ell-1-i}{i} \binom{\ell-1-2i}{m} \right]
\end{eqnarray*}
Now, since $\sum_{i+m=p} (-1)^i \binom{\ell-i}{i}\binom{\ell-2i}{m} =
\sum_{i=0}^p(-1)^i \binom{\ell-i}{\ell-p}\binom{\ell-p}{i}$, we prove
that the value of the latter sum is $1$. For this purpose, define
$\mathcal{A}$ as the collection of all subsets of $[\ell]$ with
$\ell-p$ elements and let $\mathcal{A}_x$ be the collection of those
that do not contain $x$, for $x=1,\dotsc,\ell-p$, so that
$\mathcal{A}= \{[\ell-p]\}\cup\mathcal{A}_\varnothing$, where
$\mathcal{A}_\varnothing=\bigcup_{x=1}^{\ell-p}\mathcal{A}_{x}$. Now,
let, for $\varnothing\neq T\subset [\ell-p]$,
$\mathcal{A}_T=\bigcap_{x\in T}\mathcal{A}_x$.  Since, for $i=0,\ldots,p$, there
are $\binom{\ell-p}{i}$ sets of form $\mathcal{A}_T$ with $|T|=i$, and
each one has $\binom{\ell-i}{p-i}$ elements, the result follows from
the inclusion-exclusion principle.
\end{proof}

\begin{lemma}\label{newlemma}
For every nonnegative integers $i$, $j$ and $n$ and
real numbers $a,\ell$, we have:
$$\sum_{i+j=n}\binom{2i+a}{i}\binom{2j}{j}=
\sum_{i+j=n}\binom{2i+a-\ell}{i}\binom{2j+\ell}{j}.$$
\end{lemma}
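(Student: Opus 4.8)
The statement to prove is that for every nonnegative integer $n$ and real numbers $a, \ell$,
$$\sum_{i+j=n}\binom{2i+a}{i}\binom{2j}{j}=\sum_{i+j=n}\binom{2i+a-\ell}{i}\binom{2j+\ell}{j}.$$

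The plan is to fix $n$ and $a$, and regard both sides as polynomials in $\ell$. The left-hand side is constant in $\ell$; the right-hand side is a polynomial in $\ell$ of degree at most $n$ (each summand $\binom{2i+a-\ell}{i}\binom{2j+\ell}{j}$ is a product of a degree-$i$ polynomial and a degree-$j$ polynomial in $\ell$, hence degree $n$). So it suffices to show the right-hand side is independent of $\ell$, and to do that it is enough to check equality at $n+1$ distinct values of $\ell$ — or, cleaner, to differentiate with respect to $\ell$ and show the derivative vanishes identically, or simply to evaluate at enough integer points.

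First I would reduce to the case $a=0$ is \emph{not} available (the $a$ is genuinely present), so instead I would pick a convenient specialization: set $\ell$ to be a positive integer with $\ell > 2n$ and simultaneously absorb $a$. Concretely, I would apply Lemma~\ref{newnewlemma} in the form where the parameter shift is $a-\ell$ versus $\ell'$: writing $\sum_{i+j=n}\binom{2i+a-\ell}{i}\binom{2j+\ell}{j}$ and substituting $\ell \mapsto \ell - a/2$ does not keep things integral, so the slicker route is: by the polynomiality argument it suffices to prove the identity when $a$ is a nonnegative integer and $\ell > 2n + a$ is an integer; then $2i + a - \ell = 2i - (\ell - a)$ with $\ell - a > 2n$, so by Lemma~\ref{newnewlemma} (applied with $\ell$ replaced by $\ell - a$) the right-hand side equals $4^n$. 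Taking instead $\ell = a$ — wait, that needs $a > 2n$ — gives on the right $\sum \binom{2i}{i}\binom{2j+a}{j}$, and by the symmetric version of Lemma~\ref{newnewlemma} (swapping the roles of the two factors, i.e. replacing $\ell$ by $-\ell$, or equivalently reflecting $i \leftrightarrow j$) this is again $4^n$. So both sides equal $4^n$ whenever $a > 2n$ is a nonnegative integer. Since both sides are polynomials in $a$ (of degree at most $n$), agreeing at infinitely many values of $a$ forces them to be equal as polynomials in $a$; and then, for fixed $a$, the right-hand side — being a polynomial in $\ell$ that agrees with the ($\ell$-free) left-hand side on a Zariski-dense set of $(a,\ell)$ — is identically equal to the left-hand side for all real $a, \ell$.

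The main obstacle I anticipate is bookkeeping the two independent polynomial-degree arguments (in $\ell$ and in $a$) cleanly, and making sure the chosen specialization region — nonnegative integers $a$ with $a > 2n$, and integers $\ell$ in a suitable range — is large enough to pin down a degree-$n$ polynomial in each variable separately. One has to be slightly careful that the "symmetric version" of Lemma~\ref{newnewlemma} really does follow from the stated one: in $\sum_{i+j=n}\binom{2i-\ell}{i}\binom{2j+\ell}{j}$ the substitution $\ell \mapsto -\ell$ together with the reindexing $i \leftrightarrow j$ shows $\sum_{i+j=n}\binom{2i+\ell}{i}\binom{2j-\ell}{j} = 4^n$ for $\ell > 2n$, which is exactly what is needed to evaluate the right-hand side at $\ell = a$. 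Alternatively, and perhaps more in the spirit of this paper, one could avoid the second specialization entirely: prove directly that $\frac{d}{d\ell}$ of the right-hand side is zero by a telescoping argument on the summands, using the Pascal-type recurrence $\binom{x}{i} = \binom{x-1}{i} + \binom{x-1}{i-1}$; this would give a self-contained proof, but the polynomial-interpolation route above is shorter and reuses Lemma~\ref{newnewlemma} directly.
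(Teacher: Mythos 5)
There is a genuine gap: the key evaluation step misapplies Lemma~\ref{newnewlemma}. That lemma only covers sums in which the two shifts are exact negatives of one another. Replacing $\ell$ by $\ell-a$ in it yields $\sum_{i+j=n}\binom{2i+a-\ell}{i}\binom{2j+\ell-a}{j}=4^n$, whose second factor is $\binom{2j+\ell-a}{j}$, \emph{not} the $\binom{2j+\ell}{j}$ appearing on the right-hand side of Lemma~\ref{newlemma}. In Lemma~\ref{newlemma} the two shifts are $a-\ell$ and $\ell$, which sum to $a$, so Lemma~\ref{newnewlemma} does not apply to either side unless $a=0$; likewise your ``symmetric version'' gives $\sum\binom{2i+a}{i}\binom{2j-a}{j}=4^n$, not $\sum\binom{2i+a}{i}\binom{2j}{j}=4^n$. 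Your claim that both sides equal $4^n$ whenever $a>2n$ is a nonnegative integer is in fact false: for $n=1$ and $a=5$ the left-hand side is $\binom{5}{0}\binom{2}{1}+\binom{7}{1}\binom{0}{0}=9\neq 4$. The common value of the two sides genuinely depends on $a$ --- the whole content of Lemma~\ref{newlemma} is that it depends only on the \emph{total} shift, not on how that shift is split between the two factors --- so the specializations you propose never land inside the hypotheses of Lemma~\ref{newnewlemma}, and your interpolation argument has no correct values to interpolate.

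Your overall framing (fix $n$ and $a$, note that the right-hand side is a polynomial of degree at most $n$ in $\ell$, and show it is constant) is exactly the paper's starting point, but the paper then does the hard part directly rather than by specialization: it writes the sum as $\frac{(-1)^n}{n!}\Delta^n q_0$, an $n$-th finite difference over the summation index, and shows by a degree count that the coefficient of $\ell^m$ in $q_i$ has degree at most $n-m$ in $i$, hence is annihilated by $\Delta^n$ for every $m\geq 1$. To salvage your route you would need an actual evaluation of the sum at $n+1$ values of $\ell$ for fixed $a$, or else carry out in full the Pascal-rule telescoping of $f(\ell+1)-f(\ell)$ that you mention only in passing (and which does not telescope immediately); as written, the proof does not go through.
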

\begin{proof}
  We want to prove that, for fixed $n\in\N_0$ and $a\in\R$,
  $p=\sum_{i+j=n}\binom{2i+a-\ell}{i}\binom{2j+\ell}{j}$ has degree zero as
  a polynomial (in $\ell$). Note that
\begin{align*}
  &p\;=\;\frac{1}{n!}\sum_{i=0}^n (-1)^i \binom{n}{i} q_i
  \;=\;\frac{(-1)^n}{n!}\Delta^n\,q_0, \intertext{where, for
    $i=0,1,\dotsc,n$, $p_i=\big(\ell-a+i-1\big)_i\
    \big(\ell+2n\big)_{n-i}$ and $q_i$ is obtained from $p_i$ by
    replacing $\ell$ with $\ell-2i$, so that both are polynomials of
    degree $n$ in $\ell$, and where we write, as usual,
    $\Delta\,p_i=p_{i+1}-p_i$. Now, induction on $m$ immediately shows
    that, for $1\leq m\leq n-i$,} &\Delta^m p_i=(-1)^m
  \big(a+n+m\big)_m\
  \big(\ell-a+i-1\big)_i\ \big(\ell+2n\big)_{n-i-m}\\
  \intertext{which is either zero or has degree $n-m$. We may rewrite
    each $p_i$ as } &p_i=r_n+r_{n-1}\,\ell+\dotsb+r_0\,\ell^n,
  \intertext{for suitable polynomials $r_\nell$ in $i$
    ($\nell=0,\dotsc,n$) with $r_0=1$, so that}
  &\Delta^mp_i=\Delta^mr_n+\Delta^mr_{n-1}\,\ell+\dotsb+\Delta^mr_0\,\ell^n.\\
  \intertext{Hence,
    $\Delta^m\,r_{m-1}=\dotsb=\Delta^m\,r_1=\Delta^m\,r_0=0$, which
    proves that $r_\nell$ has degree at most $\nell$ and thus}
  &q_i=r_n+r_{n-1}(\ell-2i)+\dotsb+(\ell-2i)^n,\\
  &\hphantom{q_i}=s_n+s_{n-1}\,\ell+\dotsb+\ell^n,
\end{align*}
where each $s_\nell$ is a polynomial of degree less than or equal to
$\nell$ in $i$. Therefore, $p$ is constant in $\ell$.
\end{proof}

As a clear consequence, we obtain for every nonnegative integers $t$,
$n$, and $i_1,i_2,\dotsc,i_t$ and for every real numbers
$\ell_1,\ell_2,\dotsc,\ell_t$ such that
$\ell=\ell_1+\ell_2+\dotsb+\ell_t$,
$$
\sum_{i_1 + \cdots + i_t = n} \binom{2i_1 + \ell}{i_1}\binom{2i_2}{i_2}
\dotsb \binom{2i_t}{i_t} =\sum_{i_1 + \cdots + i_t = n} \binom{2i_1 +
  \ell_1}{i_1}\binom{2i_2 + \ell_2}{i_2} \dotsb \binom{2i_t +\ell_t}{i_t}
$$
Whence we obtain the following generalization of both \eqref{eq2} and
\eqref{CXa}.
\begin{corollary}\label{newgen}
  Let $\ell_1, \ldots, \ell_t$ be any real numbers such that $\ell_1 +
  \cdots + \ell_t = 0$. Then
$$
\sum_{i_1 + \cdots + i_t = n} \binom{2i_1 + \ell_1}{i_1}\binom{2i_2 +
  \ell_2}{i_2} \dotsb \binom{2i_t + \ell_t}{i_t} =
\frac{4^n}{n!}\frac{\Gamma(n+\frac{t}{2})}{\Gamma(\frac{t}{2})}
$$
\end{corollary}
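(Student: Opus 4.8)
The plan is to first collapse the left‑hand side onto the pure Chang--Xu sum $S_t(n)=\sum_{i_1+\cdots+i_t=n}\binom{2i_1}{i_1}\cdots\binom{2i_t}{i_t}$, and then to evaluate that sum. For the collapse I would feed the given numbers $\ell_1,\dots,\ell_t$ into the displayed identity that immediately precedes the corollary (the one obtained by iterating Lemma~\ref{newlemma}): since by hypothesis $\ell:=\ell_1+\cdots+\ell_t=0$, its left‑hand factor $\binom{2i_1+\ell}{i_1}$ is simply $\binom{2i_1}{i_1}$, so that identity reads $\sum_{i_1+\cdots+i_t=n}\binom{2i_1+\ell_1}{i_1}\cdots\binom{2i_t+\ell_t}{i_t}=S_t(n)$. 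Everything thus reduces to proving $S_t(n)=\frac{4^n}{n!}\,\frac{\Gamma(n+t/2)}{\Gamma(t/2)}$.

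Next I would establish the equivalent closed form $S_t(n)=4^n\binom{n+t/2-1}{n}$ --- that is, \eqref{CXa} --- by induction on the positive integer $t$ in steps of $2$, with an inner induction on $n$. The base cases are $t=2$, where $S_2(n)=4^n=4^n\binom{n}{n}$ by \eqref{mainId}, and $t=1$, where $S_1(n)=\binom{2n}{n}$ by definition while $4^n\binom{n-1/2}{n}=\binom{2n}{n}$ because the falling factorial $(n-1/2)_n$ equals $\frac{(2n-1)(2n-3)\cdots1}{2^n}=\frac{(2n)!}{4^n\,n!}$. For the step from $t$ and $t+2$ to $t+2$ at level $n+1$, I would substitute the inductively known closed forms for $S_t(n+1)$ and $S_{t+2}(n)$ into the recurrence $S_{t+2}(n+1)=S_t(n+1)+4\,S_{t+2}(n)$ proved above; after dividing out $4^{n+1}$ the required identity becomes exactly Pascal's rule
$$\binom{n+\tfrac{t}{2}+1}{n+1}=\binom{n+\tfrac{t}{2}}{n+1}+\binom{n+\tfrac{t}{2}}{n},$$
which holds for the generalized binomial coefficients of \cite{RS} since it is a polynomial identity in the upper argument; the case $n=0$ of the inner induction is immediate, as $S_{t+2}(0)=1=4^0\binom{t/2}{0}$.

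It then only remains to rewrite \eqref{CXa} in the stated form: by definition $\binom{n+t/2-1}{n}=\frac{1}{n!}(n+t/2-1)_n=\frac{1}{n!}(n+t/2-1)(n+t/2-2)\cdots(t/2)$, a product of $n$ consecutive factors equal to $\frac{1}{n!}\,\frac{\Gamma(n+t/2)}{\Gamma(t/2)}$, so $4^n\binom{n+t/2-1}{n}=\frac{4^n}{n!}\,\frac{\Gamma(n+t/2)}{\Gamma(t/2)}$, which finishes the argument; as a sanity check, $t=2$ with $(\ell_1,\ell_2)=(-\ell,\ell)$ gives back \eqref{eq2}, and $\ell_1=\cdots=\ell_t=0$ gives back \eqref{CXa}. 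I do not anticipate a genuine obstacle here: all the real content already sits in Lemma~\ref{newlemma}, in \eqref{mainId}, and in the recurrence above, and the only mildly fiddly points are the half‑integer base case $t=1$ and keeping the two nested inductions --- on $t$ in steps of $2$, and on $n$ --- properly coordinated.
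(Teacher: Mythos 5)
Your proposal is correct and follows essentially the same route the paper intends: reduce the left-hand side to $S_t(n)$ via the displayed consequence of Lemma~\ref{newlemma} with $\ell_1+\cdots+\ell_t=0$, evaluate $S_t(n)$ as $4^n\binom{n+t/2-1}{n}$ by the two-step induction on $t$ using the recurrence $S_{t+2}(n+1)=S_t(n+1)+4\,S_{t+2}(n)$ (exactly the derivation of \eqref{CXa} the paper sketches), and convert the generalized binomial coefficient to the Gamma quotient. The only difference is that you write out the inductive details (including the $t=1$ half-integer base case) that the paper leaves as a remark.
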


\section{Generating functions}
\noindent
In what follows, we denote by $f^{(n)}$ the $n$-th derivative of a
function $f$ of one real variable, $g(x) = \sum_{n \geq 0}
\binom{2n}{n} \, x^n$ is the generating function of the central
binomial coefficients and $C(x) = \sum_{n \geq 0} \frac{1}{n+1}
\binom{2n}{n} \, x^n$ is the generating function of the Catalan
numbers.  We remember that $g(x)=\frac{1}{\sqrt{1-4x}}$ and $C(x) =
\frac{2}{1 + \sqrt{1-4x}}$. Note that $g' = 2g^3$ and $C' = g \, C^2$.
In this section we obtain the sequences with generating functions
$g^t$, $g \, C^\ell$ and $C^\ell$, for every $t, \ell \in \mathds{R}$.

\begin{lemma} \label{deriv} \hfill
\begin{enumerate}
\item \label{gtn} For every real number $t$ and nonnegative integer $n$,
\begin{equation*}
  \frac{\left( g^t \right)^{(n)}}{n!} = 4^n \binom{n + \frac{t}{2} -1}{n} g^{t+2n}.
\end{equation*}
\item \label{gCkn} For every real number $\ell$ and nonnegative integer $n$,
\begin{equation*}
  \frac{\left( g \, C^\ell \right)^{(n)}}{n!} = \sum_{i=0}^n \binom{2n-i}{n-i} 
  \binom{\ell+i-1}{i} g^{1+2n-i} C^{\ell+i}.
\end{equation*}
\item \label{Ckn} For every real number $\ell$ and positive integer $n$,
\begin{equation*}
\left( C^\ell \right)^{(n)} = \left( \ell \, g \, C^{\ell+1} \right)^{(n-1)}.
\end{equation*}
\end{enumerate}
\end{lemma}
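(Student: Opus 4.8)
The plan is to derive all three parts from the two first-order relations recalled just above the statement, $g' = 2g^3$ and $C' = g\,C^2$, together with a couple of one-line identities on falling factorials and binomial coefficients. Part~\ref{Ckn} is immediate and I would dispose of it first: differentiating once gives $(C^\ell)' = \ell\,C^{\ell-1}C' = \ell\,C^{\ell-1}\cdot g\,C^2 = \ell\,g\,C^{\ell+1}$, and applying $n-1$ further derivatives to both sides yields the claim.

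For part~\ref{gtn} I would induct on $n$. The case $n=0$ is trivial. In the inductive step, differentiate $4^n\binom{n+\frac{t}{2}-1}{n}g^{t+2n}$ and use $g' = 2g^3$ to obtain $2\cdot 4^n\,(t+2n)\binom{n+\frac{t}{2}-1}{n}g^{t+2(n+1)}$; dividing by $n+1$ turns the coefficient into $4^{n+1}\binom{n+1+\frac{t}{2}-1}{n+1}$ once one checks the falling-factorial identity $\frac{t/2+n}{n+1}\binom{n+\frac{t}{2}-1}{n}=\binom{n+\frac{t}{2}}{n+1}$. (Alternatively, since $g=(1-4x)^{-1/2}$ is analytic at the origin, one may simply iterate $\frac{d}{dx}(1-4x)^{-a}=4a\,(1-4x)^{-a-1}$ with $a=t/2$ and read the formula off directly; this is perhaps cleaner and is consistent with the closed forms already in use in this section.)

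The substance of the lemma is part~\ref{gCkn}, which I would again prove by induction on $n$, the case $n=0$ being $g\,C^\ell$ on both sides. For the step, differentiate the general summand using $g'=2g^3$ and $C'=g\,C^2$; each term splits into exactly two, one with the exponent pattern of index $(n+1,i)$ and one with that of $(n+1,i+1)$, namely
\[
\frac{d}{dx}\!\left(g^{1+2n-i}C^{\ell+i}\right)=2(1+2n-i)\,g^{1+2(n+1)-i}C^{\ell+i}+(\ell+i)\,g^{1+2(n+1)-(i+1)}C^{\ell+(i+1)}.
\]
After dividing by $n+1$ and re-indexing the second family by $i\mapsto i+1$, I would collect the coefficient of $g^{1+2(n+1)-i}C^{\ell+i}$; cancelling the common factor $\binom{\ell+i-1}{i}$ (via $(\ell+i-1)\binom{\ell+i-2}{i-1}=i\binom{\ell+i-1}{i}$) and using $\binom{2n-i}{n-i}=\binom{2n-i}{n}$ and $\binom{2n-i+1}{n-i+1}=\binom{2n-i+1}{n}$, the claim reduces, for $1\le i\le n$, to the purely binomial identity
\[
\frac{2(2n-i+1)}{n+1}\binom{2n-i}{n}+\frac{i}{n+1}\binom{2n-i+1}{n}=\binom{2n-i+2}{n+1},
\]
which follows from Pascal's rule together with $\binom{m}{k}\frac{m+1}{k+1}=\binom{m+1}{k+1}$: writing $N=2n-i$ and noting $i=2n-N$, it collapses to $\binom{N+1}{n+1}=\binom{N+1}{n}\frac{N+1-n}{n+1}$. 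The two extreme indices $i=0$ (only the first family contributes) and $i=n+1$ (only the shifted second family) are verified directly and use the same identities.

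The only real obstacle is the bookkeeping in part~\ref{gCkn}: keeping the index shift straight, handling the boundary terms $i=0$ and $i=n+1$ separately, and recognizing that the body of the sum collapses to the single Pascal-type identity above. Parts~\ref{gtn} and~\ref{Ckn} are routine.
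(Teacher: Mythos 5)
Your proposal is correct and follows the same route the paper intends: the paper's proof of this lemma consists only of the remark that (\ref{deriv}.\ref{gtn}) and (\ref{deriv}.\ref{gCkn}) follow by induction and that (\ref{deriv}.\ref{Ckn}) is obvious, and your argument is precisely that induction (via $g'=2g^3$ and $C'=gC^2$) carried out in full, with the index shift, the boundary cases $i=0$ and $i=n+1$, and the Pascal-type collapse all checked correctly.
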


\begin{proof}
  Both (\ref{deriv}.\ref{gtn}) and (\ref{deriv}.\ref{gCkn}) are easily
  shown using induction and (\ref{deriv}.\ref{Ckn}) is obvious.
\end{proof}

Our next theorem is the main result of this section. We remark that
(\ref{GenFun}.\ref{Ck}) was proved by Eug\`ene Catalan in 1876
(V. \cite[p. 62 and Errata]{cat}).

\begin{theorem}[Generating functions for $g^t$, $g \, C^\ell$ and
  $C^\ell$]\label{GenFun} \hfill
\begin{enumerate}
\item \label{gt} For every real number $t$,
\begin{equation*}
g(x)^t = \sum_{n \geq 0} 4^n \binom{n + \frac{t}{2} -1}{n} x^n.
\end{equation*}
\item \label{gCk} For every real number $\ell$,
\begin{equation*}
g(x)\, C(x)^\ell = \sum_{n \geq 0} \binom{2n+\ell}{n} x^n.
\end{equation*}
\item \label{Ck} For every real number $\ell$,
\begin{equation*}
  C(x)^\ell = 1 + \sum_{n \geq 1} \frac{\ell}{2n+\ell} \binom{2n+\ell}{n} x^n. 
\end{equation*}
\end{enumerate}
\end{theorem}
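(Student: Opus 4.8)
The plan is to deduce Theorem~\ref{GenFun} from Lemma~\ref{deriv} by evaluating Taylor coefficients at $x=0$, using that $g(0)=C(0)=1$. For part~(\ref{GenFun}.\ref{gt}), I would apply (\ref{deriv}.\ref{gtn}): the $n$-th Taylor coefficient of $g^t$ is $\frac{(g^t)^{(n)}(0)}{n!}=4^n\binom{n+\frac{t}{2}-1}{n}\,g(0)^{t+2n}=4^n\binom{n+\frac{t}{2}-1}{n}$, which is exactly the claimed series. For part~(\ref{GenFun}.\ref{gCk}), I would apply (\ref{deriv}.\ref{gCkn}) at $x=0$, so the $n$-th coefficient is $\sum_{i=0}^n\binom{2n-i}{n-i}\binom{\ell+i-1}{i}$; the task is then to show this sum equals $\binom{2n+\ell}{n}$. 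This is a Vandermonde-type identity: writing $\binom{\ell+i-1}{i}=(-1)^i\binom{-\ell}{i}$ and $\binom{2n-i}{n-i}=(-1)^{n-i}\binom{-n-1}{n-i}$, the sum becomes $(-1)^n\sum_{i=0}^n\binom{-\ell}{i}\binom{-n-1}{n-i}=(-1)^n\binom{-\ell-n-1}{n}=\binom{2n+\ell}{n}$ by the Chu–Vandermonde convolution.

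For part~(\ref{GenFun}.\ref{Ck}), I would integrate rather than differentiate: by (\ref{deriv}.\ref{Ckn}), $(C^\ell)'=\ell\,g\,C^{\ell+1}$, and since $g\,C^{\ell+1}=\sum_{n\geq0}\binom{2n+\ell+1}{n}x^n$ by part~(\ref{GenFun}.\ref{gCk}) (applied with $\ell+1$ in place of $\ell$), termwise antidifferentiation together with the initial value $C(0)^\ell=1$ gives
$$C(x)^\ell=1+\sum_{n\geq1}\frac{\ell}{n}\binom{2n+\ell-1}{n-1}x^n.$$
It then remains to check the coefficient identity $\frac{\ell}{n}\binom{2n+\ell-1}{n-1}=\frac{\ell}{2n+\ell}\binom{2n+\ell}{n}$, which follows by expanding both sides in factorials: $\binom{2n+\ell-1}{n-1}=\frac{(2n+\ell-1)!}{(n-1)!\,(n+\ell)!}$ and $\binom{2n+\ell}{n}=\frac{(2n+\ell)!}{n!\,(n+\ell)!}$, so both equal $\frac{\ell\,(2n+\ell-1)!}{n!\,(n+\ell)!}$ (interpreting the factorials via $\Gamma$ for non-integer $\ell$, or equivalently working with falling factorials as in the paper's convention).

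The only real subtlety is the Vandermonde step in part~(\ref{GenFun}.\ref{gCk}): I want the convolution identity $\sum_{i}\binom{x}{i}\binom{y}{n-i}=\binom{x+y}{n}$ to hold for the real (indeed negative) parameters $x=-\ell$, $y=-n-1$. This is standard — it is a polynomial identity in $x,y$ that holds for all reals since it holds for infinitely many integer values — but I would state it explicitly, perhaps citing it as Vandermonde's convolution, to justify the passage to arbitrary real $\ell$. Everything else is bookkeeping: the Taylor/Maclaurin expansions converge in a neighbourhood of $0$ because $g$ and $C$ are analytic there, and the termwise integration in part~(\ref{GenFun}.\ref{Ck}) is legitimate within the radius of convergence. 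I expect the write-up to be short, with the combinatorial content entirely absorbed into Lemma~\ref{deriv} and a one-line invocation of Vandermonde.
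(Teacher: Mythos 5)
Your proposal is correct, and its overall architecture — reading off Taylor coefficients at $x=0$ from Lemma~\ref{deriv}, then reducing part~(\ref{GenFun}.\ref{Ck}) to part~(\ref{GenFun}.\ref{gCk}) via $(C^\ell)'=\ell\,g\,C^{\ell+1}$ — is exactly the paper's. The one genuine divergence is how you dispose of the key identity
$\sum_{i=0}^n \binom{2n-i}{n-i}\binom{\ell+i-1}{i}=\binom{2n+\ell}{n}$
in part~(\ref{GenFun}.\ref{gCk}). The paper proves it by Zeilberger's algorithm: it exhibits a certificate $G(n,i)=i(i+1)\binom{2n+1-i}{n+1-i}\binom{\ell+i}{i+1}$ so that the sum $T(n)$ satisfies the first-order recurrence $(2n+\ell+1)(2n+\ell+2)\,T(n)=(n+\ell+1)(n+1)\,T(n+1)$, also satisfied by $\binom{2n+\ell}{n}$ with the same initial value. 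You instead rewrite $\binom{\ell+i-1}{i}=(-1)^i\binom{-\ell}{i}$ and $\binom{2n-i}{n-i}=(-1)^{n-i}\binom{-n-1}{n-i}$ and invoke Chu--Vandermonde, $\sum_i\binom{x}{i}\binom{y}{n-i}=\binom{x+y}{n}$, for the real parameters $x=-\ell$, $y=-n-1$; your negation and recombination steps check out, and the polynomiality argument for extending Vandermonde to real arguments is sound. Your route is more elementary and self-contained (no computer-algebra certificate to verify), while the paper's is mechanical and generalizes to sums with no closed hypergeometric shortcut; both are valid. Your explicit integration and coefficient bookkeeping for part~(\ref{GenFun}.\ref{Ck}), including the check $\frac{\ell}{n}\binom{2n+\ell-1}{n-1}=\frac{\ell}{2n+\ell}\binom{2n+\ell}{n}$, simply spells out what the paper leaves implicit.
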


\begin{proof}
  The identity (\ref{GenFun}.\ref{gt}) follows immediately from
  (\ref{deriv}.\ref{gtn}) whereas (\ref{GenFun}.\ref{Ck}) follows from
  (\ref{deriv}.\ref{Ckn}) and (\ref{GenFun}.\ref{gCk}).  If we show
  that
\begin{equation*}
\sum_{i=0}^n \binom{2n-i}{n-i} \binom{\ell+i-1}{i} = \binom{2n+\ell}{n} \ ,
\end{equation*}
for all nonnegative integer $n$ and real $\ell$, the result follows
from Lemma~\ref{deriv}. Let
\begin{equation*}
F(n,i) = \binom{2n-i}{n-i} \binom{\ell+i-1}{i}.
\end{equation*}
By Zeilberger's algorithm\cite{npwz,aeqb}, as implemented in
\emph{Mathematica} by Peter Paule and Markus Schorn \cite{pp} and by
Christian Krattenthaler \cite{krat}, we know that $T(n) = \sum_{i=0}^n
F(n,i)$ verifies
\begin{equation}\label{zeil}
(2n+\ell+1)(2n+\ell+2) \, T(n) - (n+\ell+1)(n+1) \, T(n+1) = 0,
\end{equation}
which is also verified by $T(n) = \binom{2n+\ell}{n}$, and for both it
holds $T(0)=1$. In fact, we can see that for $0 \leq i \leq n+1$,
\begin{equation*}
  (2n+\ell+1)(2n+\ell+2) \, F(n,i) - (n+\ell+1)(n+1) \, F(n+1,i) = G(n,i+1)-G(n,i)
\end{equation*}
with $G(n,i) = i(i+1) \binom{2n+1-i}{n+1-i} \binom{\ell+i}{i+1}$.
Hence, \eqref{zeil} holds since $F(n,n+1)=G(n,n+2)=G(n,0)=0$.
\end{proof}

\section{Acknowledgments}

We thank warmly Christian Krattenthaler and Cyril Banderier for a very
helpful conversation (that, in particular, introduced us to the
beautiful world of $A=B$ \cite{aeqb}) about the results of this
article. We also thank the \emph{The On-Line Encyclopedia of Integer
  Sequences} \cite{oeis} that was very useful throughout our work. The
work of both authors was supported in part by the European Regional
Development Fund through the program COMPETE --Operational Programme
Factors of Competitiveness (``Programa Operacional Factores de
Competitividade'') - and by the Portuguese Government through FCT --
Funda\c{c}\~ao para a Ci\^encia e a Tecnologia, under the projects
PEst-C/MAT/UI0144/2011 and PEst-C/MAT/UI4106/2011.


\begin{thebibliography}{99}
\bibitem{VdA} Valerio De Angelis, Pairings and Signed Permutations,
  \textit{Amer. Math. Monthly} \textbf{113} (2006) 642--644.
\bibitem{cat} Eug\`ene Charles Catalan, \textit{M\'elanges
    math\'ematiques} Tome deuxi\'eme, Bruxelles, 1887, available at
  \texttt{http://www.archive.org/details/mlangesmathm02catauoft}.
\bibitem{ChX} Guisong Chang and Chen Xu, Generalization and
  Probabilistic Proof of a Combinatorial Identity,
  \textit{Amer. Math. Monthly} \textbf{118} (2011) 175--177.
\bibitem{algoritmo} Ant\'onio Guedes de Oliveira,
available at\\
\texttt{http://www2.fc.up.pt/pessoas/agoliv/OCvsCwT/OCvsCwT.nb}.
\bibitem{krat}Christian Krattenthaler, HYP and HYPQ, available at\\
\texttt{http://www.mat.univie.ac.at/\ttilde kratt/hyp\_hypq/hyp.html}.
\bibitem{npwz} Istv\'an Nemes, Marko Petkov\v{s}ek, Herbert S. Wilf,
  Doron Zeilberger, How to do Monthly problems with your computer,
  \textit{Amer. Math. Monthly} \textbf{104} (1997) 505--519.
\bibitem{pp}Peter Paule, Markus Schorn, ``The Paule/Schorn
  Implementation of Gosper's and Zeilberger's Algorithms'', available at\\
\texttt{http://www.risc.jku.at/research/combinat/software/PauleSchorn/index.php}.
\bibitem{aeqb} Marko Petkov\v{s}ek, Herbert S. Wilf,
  Doron Zeilberger, {$\mathit{A=B}$}, A K Peters, Ma., Wellesley, 1996.
\bibitem{oeis} N.J.A. Sloane, \textit{The On-Line Encyclopedia of Integer Sequences}, published electronically at
\texttt{http://oeis.org}, 2011.
\bibitem{RS} Richard Stanley, \textit{Enumerative Combinatorics},
  Vol. 1, Cambridge Studies in Advanced Mathematics \textbf{49}
  Cambridge University Press, Cambridge, 1997.
\bibitem{MSa} Marta Sved, ``Counting and recounting'',
  \textit{Math. Intelligencer} \textbf{5} (1983) 21--26.
\bibitem{MS} Marta Sved, ``Counting and recounting: the aftermath'',
  \textit{Math. Intelligencer} \textbf{6} (1984) 44--45.
\end{thebibliography}
\end{document}